\begin{document}
\newtheorem{theo}{Theorem}
\newtheorem{exam}{Example}
\newtheorem{coro}{Corollary}
\newtheorem{defi}{Definition}
\newtheorem{prob}{Problem}
\newtheorem{lemm}{Lemma}
\newtheorem{prop}{Proposition}
\newtheorem{rem}{Remark}
\newtheorem{conj}{Conjecture}
\newtheorem{calc}{}

\def\gru{\mu} 
\def\pg{{ \sf S}}               
\def\TS{{\mathlarger{\bf T}}}                
\def\NB{{\mathlarger{\bf N}}}
\def\group{{\sf G}}
\def\NLL{{\rm NL}}   

\def\plc{{ Z_\infty}}    
\def\pola{{u}}      
\newcommand\licy[1]{{\mathbb P}^{#1}} 
\newcommand\aoc[1]{Z^{#1}}     
\def\HL{{\rm Ho}}     
\def\NLL{{\rm NL}}   

\def\Z{\mathbb{Z}}                   
\def\Q{\mathbb{Q}}                   
\def\C{\mathbb{C}}                   
\def\N{\mathbb{N}}                   
\def\uhp{{\mathbb H}}                
\def\A{\mathbb{A}}                   
\def\dR{{\rm dR}}                    
\def\F{{\cal F}}                     
\def\Sp{{\rm Sp}}                    
\def\Gm{\mathbb{G}_m}                 
\def\Ga{\mathbb{G}_a}                 
\def\Tr{{\rm Tr}}                      
\def\tr{{{\mathsf t}{\mathsf r}}}                 
\def\spec{{\rm Spec}}            
\def\ker{{\rm ker}}              
\def\GL{{\rm GL}}                
\def\ker{{\rm ker}}              
\def\coker{{\rm coker}}          
\def\im{{\rm Im}}               
\def\coim{{\rm Coim}}            
\def\p{{\sf  p}}
\def\U{{\cal U}}   

\def\weig{{\nu}}
\def\r{{ r}}                       
\def\k{{\sf k}}                     
\def\ring{{\sf R}}                   
\def\X{{\sf X}}                      
\def\Ua{{   L}}                      
\def\T{{\sf T}}                      
\def\asone{{\sf A}}                  

\def\Ts{{\sf S}}
\def\cmv{{\sf M}}                    
\def\BG{{\sf G}}                       
\def\podu{{\sf pd}}                   
\def\ped{{\sf U}}                    
\def\per{{\bf  P}}                   
\def\gm{{  A}}                    
\def\gma{{\sf  B}}                   
\def\ben{{\sf b}}                    

\def\Rav{{\mathfrak M }}                     
\def\Ram{{\mathfrak C}}                     
\def\Rap{{\mathfrak G}}                     

\def\mov{{\sf  m}}                    
\def\Yuk{{\sf C}}                     
\def\Ra{{\sf R}}                      
\def\hn{{ h}}                         
\def\cpe{{\sf C}}                     
\def\g{{\sf g}}                       
\def\t{{\sf t}}                       
\def\pedo{{\sf  \Pi}}                  

\def\Der{{\rm Der}}                   
\def\MMF{{\sf MF}}                    
\def\codim{{\rm codim}}                
\def\dim{{\rm    dim}}                
\def\Lie{{\rm Lie}}                   
\def\gg{{\mathfrak g}}                

\def\u{{\sf u}}                       

\def\imh{{  \Psi}}                 
\def\imc{{  \Phi }}                  
\def\stab{{\rm Stab }}               
\def\Vec{{\rm Vec}}                 
\def\prim{{\rm  0}}                  

\def\Fg{{\sf F}}     
\def\hol{{\rm hol}}  
\def\non{{\rm non}}  
\def\alg{{\rm alg}}  
\def\tra{{\rm tra}}  

\def\bcov{{\rm \O_\T}}       

\def\leaves{{\cal L}}        

\def\cat{{\cal A}}              
\def\im{{\rm Im}}               

\def\pn{{\sf p}}              
\def\Pic{{\rm Pic}}           
\def\free{{\rm free}}         
\def \NS{{\rm NS}}    
\def\tor{{\rm tor}}
\def\codmod{{\xi}}    

\def\GM{{\rm GM}}

\def\perr{{\sf q}}        
\def\perdo{{\cal K}}   
\def\sfl{{\mathrm F}} 
\def\sp{{\mathbb S}}  

\newcommand\diff[1]{\frac{d #1}{dz}} 
\def\End{{\rm End}}              

\def\sing{{\rm Sing}}            
\def\cha{{\rm char}}             
\def\Gal{{\rm Gal}}              
\def\jacob{{\rm jacob}}          
\def\tjurina{{\rm tjurina}}      
\newcommand\Pn[1]{\mathbb{P}^{#1}}   
\def\P{\mathbb{P}}
\def\Ff{\mathbb{F}}                  

\def\O{{\cal O}}                     

\def\ring{{\mathsf R}}                         
\def\R{\mathbb{R}}                   

\newcommand\ep[1]{e^{\frac{2\pi i}{#1}}}
\newcommand\HH[2]{H^{#2}(#1)}        
\def\Mat{{\rm Mat}}              
\newcommand{\mat}[4]{
     \begin{pmatrix}
            #1 & #2 \\
            #3 & #4
       \end{pmatrix}
    }                                
\newcommand{\matt}[2]{
     \begin{pmatrix}                 
            #1   \\
            #2
       \end{pmatrix}
    }
\def\cl{{\rm cl}}                

\def\hc{{\mathsf H}}                 
\def\Hb{{\cal H}}                    
\def\pese{{\sf P}}                  

\def\PP{\tilde{\cal P}}              
\def\K{{\mathbb K}}                  

\def\M{{\cal M}}
\def\RR{{\cal R}}
\newcommand\Hi[1]{\mathbb{P}^{#1}_\infty}
\def\pt{\mathbb{C}[t]}               
\def\gr{{\rm Gr}}                
\def\Im{{\rm Im}}                
\def\Re{{\rm Re}}                
\def\depth{{\rm depth}}
\newcommand\SL[2]{{\rm SL}(#1, #2)}    
\newcommand\PSL[2]{{\rm PSL}(#1, #2)}  
\def\Resi{{\rm Resi}}              

\def\L{{\cal L}}                     
\def\Aut{{\rm Aut}}              
\def\any{R}                          
\newcommand\ovl[1]{\overline{#1}}    

\newcommand\mf[2]{{M}^{#1}_{#2}}     
\newcommand\mfn[2]{{\tilde M}^{#1}_{#2}}     

\newcommand\bn[2]{\binom{#1}{#2}}    
\def\ja{{\rm j}}                 
\def\Sc{\mathsf{S}}                  
\newcommand\es[1]{g_{#1}}            
\newcommand\V{{\mathsf V}}           
\newcommand\WW{{\mathsf W}}          
\newcommand\Ss{{\cal O}}             
\def\rank{{\rm rank}}                
\def\Dif{{\cal D}}                   
\def\gcd{{\rm gcd}}                  
\def\zedi{{\rm ZD}}                  
\def\BM{{\mathsf H}}                 
\def\plf{{\sf pl}}                             
\def\sgn{{\rm sgn}}                      
\def\diag{{\rm diag}}                   
\def\hodge{{\rm Hodge}}
\def\HF{{ F}}                                
\def\WF{{ W}}                               
\def\HV{{\sf HV}}                                
\def\pol{{\rm pole}}                               
\def\bafi{{\sf r}}
\def\id{{\rm id}}                               
\def\gms{{\sf M}}                           
\def\Iso{{\rm Iso}}                           

\def\hl{{\rm L}}    
\def\imF{{\rm F}}
\def\imG{{\rm G}}

\def\cf{r}   
\def\cm{\checkmark}
\def\MI{{\cal M}}
\def\se{{\sf s}}

\begin{center}
{\LARGE\bf Special components of Noether-Lefschetz loci
}
\\
\vspace{.25in} {\large {\sc Hossein Movasati}}
\footnote{
Instituto de Matem\'atica Pura e Aplicada, IMPA, Estrada Dona Castorina, 110, 22460-320, Rio de Janeiro, RJ, Brazil,
{\tt www.impa.br/$\sim$ hossein, hossein@impa.br.}}
\end{center}
\begin{abstract}
We take a sum $C_1+\cf C_2,\ \cf\in\Q$ of a line $C_1$ and a complete intersection curve $C_2$ of type 
$(3,3)$ inside a smooth surface of 
degree $8$ and with $C_1\cap C_2=\emptyset$.  We gather evidences to the fact that for all except a finite number of $\cf$, 
the Noether-Lefschetz loci attached to the cohomology classes of $C_1+\cf C_2$ are distinct $31$ codimensional subvarieties intersecting 
each other  in a $32$ codimensional subvariety of the ambient space.  
The maximum  codimension for components of the Noether-Lefschetz locus in this case is $35$, and hence,  
we provide a conjectural description of a counterexample to a conjecture of J. Harris. 
The methods used in this paper also  produce in a rigorous way an  infinite number of general components passing through the point representing the Fermat surface of degree $\leq 9$, and many non-reduced components for such degrees.  
\end{abstract}

\section{Introduction}
In the parameter space $\T_{\rm full}$ of smooth surfaces of degree $d\geq 4$ in $\P^3$ the Noether-Lefschetz locus $\NLL_d$ is a union of enumerable subvarieties of $\T_{\rm full}$ and its points parameterize surfaces with Picard number $\geq 2$. A component of $\NLL_d$  of codimension equal to (resp. strictly less than) $h^{20}=\binom{d-1}{3}$ is called general (resp. special). It is known that general components are 
dense in $\T_{\rm full}$ in  both usual and Zariski topology, see \cite{CHM88}, \cite[\S 5.3.4]{vo03} and \cite{CL1991}, and special  components of codimension $d-3$  and $2d-7, \ d\geq 5$ are unique and parameterize respectively surfaces with a line and conic, see \cite{green1988, green1989, voisin1988, voisin89}. This implies that for $d=5$ we have only two special components. J. Harris in 1980's conjectured that the number of special components must be  
finite. C. Voisin in \cite{voisin1991} found counterexamples to this for a large $d$, however, the conjecture in lower degrees remains open. In \cite{voisin90} it is  proved that for $d=6,7$ the number of reduced special components 
is  finite, and so, it is expected that Harris' conjecture is true in these cases.  However, for $d=8$ it is widely open.  In this article we describe a conjectural description of an infinite number of reduced special components of $\NLL_8$.

Let $X_0$ be a smooth surface in $\Pn 3$ of degree $d\geq 4$. We assume that the Picard number $\rho(X_0)$ of $X_0$ is bigger than or equal to 
$3$, and  hence,  $X_0$ has two curves $C_1$ and $C_2$ whose cohomology classes are linearly independent in the second primitive cohomology of $X_0$. We consider a one dimension family $[C_1]+\cf [C_2]\in H_2(X_0,\Q),\ \ \cf\in\Q$ and the corresponding family of Noether-Lefschetz loci  $V_{[C_1]+\cf [C_2]}$ inside the parameter space $\T_{\rm full}$ of smooth surfaces in $\P^3$, 
see \S\ref{CaporasoHuybrechtDebarre2019} for the definition. It is  equipped with an analytic scheme  structure and its underlying 
analytic variety is a union of branches of $\NLL_d$ near $0\in\T$.  In the present paper 
we are looking for  a special pencil of Noether-Lefschetz locus $V_{[C_1]+\cf[C_2]}$. 
\begin{defi}\rm 
 We say that $V_{[C_1]+\cf[C_2]}$ is a special pencil if 1. for all $\cf\in\Q$,
 $\codim \TS_0V_{[C_1]+\cf[C_2]}<h^{20}:=\binom{d-1}{3}$, 2.    
there is no inclusion between the tangent spaces $\TS_0V_{[C_1]}$ and $\TS_0V_{[C_2]}$ and 
3. for all $\cf\in\Q$ except a finite number,  $V_{[C_1]+\cf[C_2]}$ is smooth  as an analytic scheme (and hence reduced). If instead of the last property, the $N$-th infinitesimal  
Noether-Lefschetz locus $V^N_{[C_1]+\cf[C_2]}$ is the $N$-jet of a smooth  variety then we call it an $N$-th infinitesimal special pencil. 
If at least for one $\cf\in\Q$ we have  $\codim \TS_0V_{[C_1]+\cf[C_2]}=h^{20}(X_0)$ and the condition 2 as above is satisfied then  
$V_{[C_1]+\cf[C_2]}$ is automatically smooth  and we call it  a general pencil.  
\end{defi}
If  a special pencil exists, it gives us an infinite number of special reduced components of $\NLL_d$ passing through a point, and hence, a counterexample to Harris' conjecture.  We focus on the following class of examples. 
Let $d, d_1,d_2,s_1,s_2,m_1,m_2$ be integers with 
\begin{equation}
\label{bolso2019}
1\leq d_1\leq d_2\leq \frac{d}{2},\ 1\leq s_1,s_2\leq \frac{d}{2},\ \  \ 0\leq m_1\leq \min \{d_1, s_1\}\  \ 0\leq m_2\leq \min\{d_2, s_2\}.
\end{equation}
Let also $f=f_1f_3f_5f_7 +f_2f_4f_6f_8\in\C[x]_d:=\C[x_0,x_1,x_2,x_3]_d$ with  
$$
f_1\in\C[x]_{m_1},\ \ f_2\in \C[x]_{m_2},\ \ f_3\in \C[x]_{d_1-m_1},  f_4\in \C[x]_{d_2-m_2}, \ \
$$
$$
f_5\in \C[x]_{s_1-m_1},  f_6\in \C[x]_{s_2-m_2},\ \  f_7\in \C[x]_{d-d_1-s_1+m_1},  f_8\in \C[x]_{d-d_2-s_2+m_2}.
$$
We consider the surface $X_0\in\P^3$ given by $f=0$ and two algebraic curves 
\begin{eqnarray}
C_1&:&  f_1f_3=f_2f_4=0,\\
C_2&:&  f_1f_5=f_2f_6=0. 
\end{eqnarray}
Our main example is the Fermat surface given by $f=x_0^d+x_1^d+x_2^d+x_3^d$ and 
\begin{eqnarray}
& &  f_1:=\prod_{i=0}^{m_1-1}(x_0-\zeta_{2d}^{2i+1}x_1),\ \ 
f_3:=\prod_{i=m_1}^{d_1-1}(x_0-\zeta_{2d}^{2i+1}x_1),\ \ 
f_5:=\prod_{i=d_1}^{d_1+s_1-m_1 -1}(x_0-\zeta_{2d}^{2i+1}x_1),\\
& &  f_2:=\prod_{i=0}^{m_2-1}(x_2-\zeta_{2d}^{2i+1}x_3),\ \ 
f_4:=\prod_{i=m_2}^{d_2-1}(x_2-\zeta_{2d}^{2i+1}x_3),\ \
f_6:=\prod_{i=d_2}^{d_2+s_2-m_2 -1}(x_2-\zeta_{2d}^{2i+1}x_3),
\end{eqnarray}
and $f_7,f_8$ are the rest of the factors in the factorization of $x_0^d+x_1^d$ and $x_2^d+x_3^d$.  
In this paper we prove the following. 
\begin{theo}
 \label{main}
 Let us consider the Fermat surface of degree $d=4,5,6,7,8$ and a choice of  integers in 
 \eqref{bolso2019}  except the case 
 \begin{equation}
 \label{forastf!!}
 d=8,\ \  \{(d_1,d_2), (s_1,s_2) \}=\{(3,3),  (1,1)\} \ \  (m_1,m_2)=(0,0). 
 \end{equation}
 Assume that $\codim \TS_0V_{[C_1]+\cf[C_2]}<\binom{d-1}{3}$ and  there is no inclusion between 
 $\TS_0V_{[C_1]}$ and $\TS_0V_{[C_2]}$. Moreover
  \begin{equation}
\label{alexis2019-2}
\cf:=\frac{\cf_2}{\cf_1},\ \ \cf_1,\cf_2,\in\Z,\ \  1\leq \cf_1\leq 10, \ \  0 \leq |\cf_2|\leq 10
\end{equation}
The Noether-Lefschetz locus $V_{[C_1]+\cf[C_2]}$ with 
\begin{equation}
\label{alexis2019}
3\leq \cf_1,\  \ \hbox{ or }\ \   3\leq |\cf_2|,
\end{equation}
is singular as an analytic scheme (as  an analytic variety this means that either it is singular at the Fermat point  $0$ or 
its defining ideal  is non-reduced).
\end{theo}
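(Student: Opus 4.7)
The strategy is to translate the scheme-theoretic smoothness of $V_{[C_1]+\cf[C_2]}$ at the Fermat point $0\in\TS$ into an explicit obstruction calculation in the Jacobian ring of $f=x_0^d+x_1^d+x_2^d+x_3^d$, and then verify by direct enumeration over the finite set of rationals $\cf=\cf_2/\cf_1$ cut out by \eqref{alexis2019-2} and \eqref{alexis2019}.

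The first step is the infinitesimal presentation. By the Griffiths/IVHS description of Hodge loci recalled in \S\ref{CaporasoHuybrechtDebarre2019}, one writes $V_{[C_1]+\cf[C_2]}$ as the zero locus of an ideal whose truncations at order $N$ produce the $N$-th infinitesimal Noether--Lefschetz loci $V^N_{[C_1]+\cf[C_2]}$; the generators at order $N$ are obtained recursively from iterated cup products of $\lambda_\cf:=[C_1]+\cf[C_2]$ against the Kodaira--Spencer map, and depend polynomially on $\cf$ (since $\lambda_\cf$ is linear in $\cf$). The scheme is smooth at $0$ iff the tangent cone of $V^N_{[C_1]+\cf[C_2]}$ coincides with $\TS_0 V_{[C_1]+\cf[C_2]}$ and the ideal is radical for all $N$. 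Since the obstruction coefficients are polynomials in $\cf$, smoothness is cut out by a Zariski-closed subset of the $\cf$-line, which is either a finite set or, in the exceptional case of a special pencil, all of $\mathbb{A}^1$. The theorem then asserts that outside the excluded case \eqref{forastf!!}, this finite set of potentially smooth $\cf$ does not contain any value with $\cf_1\geq 3$ or $|\cf_2|\geq 3$.

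The Fermat surface carries a diagonal torus action of $\gru_d^4/\gru_d$; the Jacobian ring and all relevant cup-product tensors decompose into eigenspaces under this action, which renders the obstruction maps sparse and the associated linear algebra manageable. For the specific factorizations of $f_1,\dots,f_8$ given in the introduction, the classes $[C_1]$ and $[C_2]$ are realized as concrete sums of monomials in $\C[x]_d/J(f)_d$, and the second-order obstruction to lifting a first-order deformation becomes a system of polynomial equations in $\cf$ with integer coefficients. The final step is a finite but sizeable computer enumeration: for each sextuple $(d_1,d_2,s_1,s_2,m_1,m_2)$ satisfying \eqref{bolso2019} in degrees $d\in\{4,5,6,7,8\}$ outside \eqref{forastf!!}, and for each pair $(\cf_1,\cf_2)$ satisfying \eqref{alexis2019-2} and \eqref{alexis2019}, one checks that the obstruction is non-zero. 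The main obstacle is the scale of the case analysis for $d=8$, where the Jacobian ring has substantial dimension and the cup-product tensors are bulky; a secondary concern is certifying that a bounded infinitesimal order $N$ suffices to detect non-smoothness in each case --- in practice $N=2$ works throughout, but this has to be verified case by case as part of the computation.
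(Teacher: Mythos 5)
Your high-level plan---translate scheme-theoretic non-smoothness into a finite-order obstruction that depends polynomially on $\cf$, then verify non-vanishing by direct enumeration over the pairs $(\cf_1,\cf_2)$---matches the paper's architecture. But there are two concrete problems.

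First, your claim that ``in practice $N=2$ works throughout'' is false, and would lead a purely $N=2$ check to \emph{miss} singular cases. The data in Table~\ref{jinbabak2019} shows that for $d=5$ there are five pairs that are $2$-smooth but fail only at $N=3$, and five more that are $2$-,\,$3$-,\,$4$-,\,$5$-smooth and first fail at $N=6$; for $d=6$ there are $49$ pairs first failing at $N=3$; for $d=7$ eight pairs first fail at $N=4$; for $d=8$ eighteen pairs first fail at $N=3$. An argument that stops at second order simply does not prove the theorem, and there is no a priori bound on the order needed short of running the computation to whatever $N$ detects non-smoothness. You flag this as a ``secondary concern'' to be certified case by case, but it is in fact load-bearing.

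Second, you omit the dimension reduction that makes the computation feasible at all. The paper does not run the $N$-smoothness test in the full parameter space $\T$ of \eqref{15dec2016}: it explicitly notes that this is too heavy even for $N=2$. Instead it passes to the smaller deformation space $\check\T$ of \S\ref{02june2019} (e.g.\ the $32$-parameter family \eqref{danithegamer2019} for $d=8$), chosen so that $\TS_0 V_{[C_1]}\cap\TS_0 V_{[C_2]}=\{0\}$ inside $\TS_0\check\T$. This requires an extra, non-trivial ingredient your proposal has no substitute for: a transversality verification that $\TS_0 V_{[C_1]+\cf[C_2]}$ meets $\TS_0\check\T$ transversely, so that non-smoothness of $V_{[C_1]+\cf[C_2]}\cap\check\T$ implies non-smoothness of $V_{[C_1]+\cf[C_2]}$. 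The non-transversal pairs $(\cf_1,\cf_2)$ (column `NT') all satisfy $\cf_1\le 2$, $|\cf_2|\le 2$, which is exactly why \eqref{alexis2019} imposes $3\le\cf_1$ or $3\le|\cf_2|$; without the transversality bookkeeping that restriction is unmotivated and your argument does not explain it. Finally, a smaller point: the paper does not compute obstructions via cup products in the Jacobian ring, but via the explicit Taylor series for periods of monodromies of $[\P^1]$ over the Fermat family (Theorem~\ref{InLabelNadasht?}), following \cite[\S 18.3]{ho13}. This is an equivalent IVHS-type computation but a genuinely different implementation, and the closed-form combinatorial formula for the Taylor coefficients is what keeps the high-order checks tractable.
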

For further non-reducedness statements see  \cite[Proposition 1]{mclean2005}, \cite[Theorem 1.2]{Dan2017}, 
\cite[Theorem 18.3]{ho13}.
The number of cases such that the hypothesis of Theorem \ref{main} is satisfied is the difference of $\#$  with the sum of  `General'  and  `Inclusion'  in Table \ref{jinbabak2019}. For instance for $d=5$ we have $10=61-(47+4)$ such cases. 
The upper bound for $\cf_1, |\cf_2|$ in  \eqref{alexis2019-2} is due to our computational methods, and so, the above theorem suggests that 
$V_{[C_1]+\cf [C_2]}$ is not a special pencil except for \eqref{forastf!!}. 
In this exceptional case we have all the properties of a special pencil except the last one. 
We expect  this case  provides a special pencil. In order to provide evidences
for this missing property we consider the following deformation of the Fermat surface: 
\begin{equation}
\label{danithegamer2019}
X_t: \ \ x_0^8+x_1^8+x_2^8+x_3^8-\sum t_i x^i=0,
\end{equation}
where the sum runs through the following collection of $32$ monomials 
$$
x_1^6x_3^2, 
x_1^6x_2x_3,
   x_1^5x_3^3,
   x_0x_1^4x_3^3,
   x_1^6x_2^2,
   x_1^5x_2x_3^2,
   x_0x_1^4x_2x_3^2,
   x_1^4x_3^4,
   x_0x_1^3x_3^4,
   x_0^2x_1^2x_3^4,
   $$
   $$
   x_1^5x_2^2x_3,
   x_0x_1^4x_2^2x_3,
   x_1^4x_2x_3^3,
   x_0x_1^3x_2x_3^3,
   x_0^2x_1^2x_2x_3^3,
   x_0^3x_3^5,
   x_0x_1^2x_2x_3^4, 
   x_1^3x_2x_3^4,
   x_1^4x_2^2x_3^2,
   x_0x_1^3x_2^2x_3^2,
   $$
   $$
   x_0^2x_1^2x_2^2x_3^2,
   x_0^2x_1x_2x_3^4, 
   x_0x_1^2x_2^2x_3^3,
   x_1^3x_2^2x_3^3,
   x_0^2x_2^2x_3^4,
   x_1^2x_2x_3^5,
   x_0x_1x_2^3x_3^3,
   x_0x_1^5x_3^2,
   x_0^2x_1^3x_3^3,
   x_0^3x_1x_3^4,
   $$
   $$
   x_0^2x_1x_2^2x_3^3,
   x_0^2x_2^3x_3^3. 
$$
This deformation is chosen in such a way that $V_{[C_1]}\cap V_{[C_2]}=\{0\}$, see \S\ref{02june2019}.   
We prove that 
\begin{theo}
\label{koshti2019}
 For \eqref{forastf!!} the infinitesimal Noether-Lefschetz locus $V^5_{[C_1]+\cf [C_2]}$ in the parameter space of
 the deformation \eqref{danithegamer2019} and with  \eqref{alexis2019-2} 
 is the $5$-jet of a smooth  variety. 
\end{theo}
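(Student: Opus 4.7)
The plan is to verify smoothness of $V^5_{[C_1]+\cf[C_2]}$ order by order, using the Jacobian-ring framework for infinitesimal Hodge classes that underlies the earlier arguments of the paper. Recall that $V_{[C_1]+\cf[C_2]}$ is cut out inside the $32$-dimensional parameter space of \eqref{danithegamer2019} by a countable family of analytic equations expressing that the class $[C_1]+\cf[C_2]$ remains algebraic; truncating the Taylor expansion at order five yields finitely many polynomial equations of degrees $1$ through $5$ in the parameters $t_i$, whose coefficients are themselves polynomial in $\cf_1,\cf_2$. For $V^5_{[C_1]+\cf[C_2]}$ to be the $5$-jet of a smooth variety it suffices to exhibit, for each such equation, that its degree-$k$ part for $k=2,3,4,5$ already lies in the ideal generated by the degree-$1$ (tangent) equations, modulo order $6$.

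The first step is the explicit description of the tangent space $\TS_0 V_{[C_1]+\cf[C_2]}$ as a subspace of $\C^{32}$ cut out by linear forms in the $t_i$ whose coefficients depend polynomially on $(\cf_1,\cf_2)$. The $32$ monomials in \eqref{danithegamer2019} are chosen in \S\ref{02june2019} precisely so that $\TS_0 V_{[C_1]}\cap \TS_0 V_{[C_2]}=\{0\}$ inside this ambient, which pins down the dimension of $\TS_0V_{[C_1]+\cf[C_2]}$ for all $\cf$ satisfying the genericity hypothesis. The second step is, for each $k=2,3,4,5$, to compute the $k$-linear symmetric obstruction map arising from the $k$-fold iterated Kodaira--Spencer cup product against the period, evaluated on a basis of the tangent space, and to verify that the resulting polynomial in the tangent coordinates and in $(\cf_1,\cf_2)$ vanishes in the relevant graded piece of the Jacobian ring of $x_0^8+x_1^8+x_2^8+x_3^8$.

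Operationally, the verification is a computer algebra calculation in which $(\cf_1,\cf_2)$ is kept as a symbolic pair of integer indeterminates, so that the entire range \eqref{alexis2019-2} is handled in a single pass rather than being checked pointwise for each of the roughly $200$ admissible fractions. The Fermat octic carries a $(\Z/8)^4$-action under which both the Jacobian ring and the span of the $32$ deformation monomials decompose into character isotypic components, and the obstruction maps respect this decomposition; exploiting this symmetry partitions the verification into a collection of much smaller block-diagonal identities indexed by characters.

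The principal obstacle is the sheer combinatorial size of the order-$4$ and order-$5$ obstructions. A degree-$5$ symmetric tensor in $32$ tangent directions has hundreds of thousands of components, each landing in a graded piece of Jacobian-ring dimension comparable to $h^{2,0}=35$; combined with the need to carry $(\cf_1,\cf_2)$ symbolically, this pushes the computation to the edge of what is tractable. The realistic path to completion is to restrict to one character of $(\Z/8)^4$ at a time and to choose a parametrization of $\TS_0V_{[C_1]+\cf[C_2]}$ for which the higher-order reductions in the Jacobian ring simplify substantially, so that the final verification reduces to the vanishing of a moderate number of polynomial identities in $\Q[\cf_1,\cf_2]$.
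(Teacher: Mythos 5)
Your overall plan---verify $5$-smoothness of $V^5_{[C_1]+\cf[C_2]}$ inside the $32$-dimensional deformation space by checking that the higher-order parts of the defining functions reduce modulo the tangent equations---is the right shape of argument, and it is compatible with the $N$-smoothness criterion the paper invokes from \cite[after Theorem 18.9]{ho13}. But the technical route differs from the paper's in ways that matter, and one of them is a genuine gap.

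The paper does not run the computation through iterated Kodaira--Spencer cup products and Jacobian-ring obstruction maps. It uses instead the explicit closed-form Taylor expansion of the periods $\int_{\delta_t}\Resi(\cdots)$ over the monodromy of $\P^1\subset X_0$, Theorem~\ref{InLabelNadasht?} in \S\ref{aima2017}, which writes each of the $h^{2,0}=35$ defining functions directly as a power series in the $t_\alpha$ with Pochhammer-symbol coefficients. This replaces the obstruction calculus by plain truncation of an explicit series, and is the computational engine behind {\tt code1}. The two viewpoints agree at first order (both reduce to \eqref{16.06.2019}), but at orders $2$ through $5$ the period formula is the tool that makes the check feasible; your Jacobian-ring route is valid in principle but is exactly what the paper avoids.

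The more serious gap is that you propose to carry $(\cf_1,\cf_2)$ symbolically in $\Q[\cf_1,\cf_2]$ and dispatch the whole range of \eqref{alexis2019-2} in one pass. The paper does not do this and states explicitly that it cannot: the bounds $1\le\cf_1\le 10$, $0\le|\cf_2|\le 10$ are ``due to our computational methods,'' and {\tt code1} loops over the finitely many fractions, with a reported runtime of over ten days for this theorem alone. This pointwise nature is also why Theorem~\ref{koshti2019} is only offered as \emph{evidence} for a special pencil rather than a proof. A working symbolic check would be strictly stronger than what the paper achieves, but you have not produced it, and the paper's own experience argues against its tractability under the $(\Z/8)^4$ decomposition you suggest (which the paper does not use).

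Two smaller corrections. First, the tangent space $\TS_0V_{[C_1]+\cf[C_2]}$ has codimension $31$ inside the $32$-dimensional ambient \eqref{danithegamer2019}, hence it is one-dimensional; the order-$5$ obstruction problem is a $5$-jet problem along a single line, not a degree-$5$ symmetric tensor on $32$ directions, so your complexity estimate is off by orders of magnitude and points at the wrong bottleneck (the cost is in expanding the period series, not in multilinear algebra). Second, the pair $(\cf_1,\cf_2)=(1,0)$ must be handled separately: for it $V_{[C_1]+\cf[C_2]}=V_{[C_1]}$, transversality to the small deformation space fails, and the paper appeals to the classical fact that the line component $V_{[C_1]}$ in the full space \eqref{15dec2016} is smooth. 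Your proposal as written would not detect and would not resolve this edge case.
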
 
In our way to prove Theorem \ref{main} and Theorem \ref{koshti2019}, we have found many general pencils.
By definition  members of such a pencil are reduced and smooth at the Fermat point $0$. 
\begin{theo}
\label{main2}
 For $d=4,5,6,7,8,9$,  a general pencil $V_{[C_1]+\cf [C_2]}$ exists and the number of such pencils 
 are listed under the column `General' in Table \ref{jinbabak2019}. For $d=10,11$ general pencils do not exist.  
\end{theo}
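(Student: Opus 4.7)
The plan is to convert the statement into a finite linear-algebra computation inside the Jacobian ring of the Fermat polynomial. By Griffiths' description of the infinitesimal variation of Hodge structures on a smooth hypersurface, together with its Carlson extension to classes of complete-intersection cycles, the tangent space $\TS_0 V_\delta$ at the Fermat point of the Noether--Lefschetz locus attached to a primitive Hodge class $\delta$ is cut out in $\TS_0 \T_{\rm full}$ by the vanishing of the pairings $\langle v\cdot\omega,\cl(\delta)\rangle$ as $\omega$ runs over a basis of $H^{2,0}(X_0)$. Here the multiplication takes place in the Jacobian ring $R=\C[x_0,x_1,x_2,x_3]/\jacob(f)$ with $f=\sum_{i=0}^{3}x_i^d$, and $\cl(\delta)\in R^{d-4}$ is a Jacobian-ring representative of $\delta$. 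For the complete-intersection curves $C_1$ and $C_2$ built from the factors $f_1,\ldots,f_8$, such representatives can be written down explicitly from the integers $(d_i,s_i,m_i)$.

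Packaging these equations produces two matrices $M_1,M_2$, with rows indexed by a basis of $H^{2,0}(X_0)$ and columns by a basis of $\TS_0 \T_{\rm full}$, such that $\codim \TS_0 V_{[C_1]+r[C_2]}=\rank(M_1+rM_2)$ for every $r$. As a function of $r$ this rank is lower semicontinuous, hence attains its maximum on a Zariski-open subset of $\Q$. The general-pencil condition is then the conjunction of: (i) the maximum pencil rank equals $\binom{d-1}{3}$, and (ii) no inclusion holds between $\ker M_1$ and $\ker M_2$, which is again a rank comparison. Both tests can be certified by evaluating at one ``random'' rational value of $r$ and performing the rank computations.

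The proof then proceeds by exhaustive enumeration. For each $d\in\{4,5,\ldots,11\}$, list all tuples $(d_1,d_2,s_1,s_2,m_1,m_2)$ satisfying \eqref{bolso2019}; for each tuple, assemble $M_1,M_2$ by the recipe above and apply the two tests. The number of surviving tuples yields the count in the ``General'' column of Table \ref{jinbabak2019} for $d\le 9$, and zero for $d=10,11$.

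The main obstacle is computational scale: the degree-$d$ piece of the Jacobian ring of the Fermat polynomial grows cubically in $d$, so for $d=11$ the matrices $M_i$ have on the order of hundreds of rows and columns, while the number of tuples to enumerate is also large. Exact rational arithmetic in a computer-algebra system is required to certify the ranks reliably. A conceptual pitfall worth flagging is that one should not replace the pencil rank $\rank(M_1+rM_2)$ by the rank of the stacked matrix whose rows are those of $M_1$ followed by those of $M_2$: the latter equals $\codim(\ker M_1 \cap \ker M_2)$ and in general strictly exceeds the generic value of $\rank(M_1+rM_2)$. The enumeration must therefore be phrased in terms of the actual pencil rank at a chosen $r$, which is exactly what the definition of general pencil demands.
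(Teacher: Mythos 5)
Your proposal is, in substance, the paper's proof: Theorem \ref{main2} is established by a computer search over the tuples satisfying \eqref{bolso2019}, computing for each the pencil rank $\rank(M_1+rM_2)$ at a generic $r$ together with the stacked-matrix rank $\codim(\ker M_1\cap\ker M_2)$, and certifying that the former equals $\binom{d-1}{3}$ while being strictly less than the latter --- exactly the hypothesis of Proposition \ref{danieltoulouse2019}, including the rank-versus-stacked-rank distinction you correctly flag and which the paper's {\tt code1}/{\tt code2} respect. One small slip worth correcting: the Jacobian-ring representative $\cl(\delta)$ of a primitive $(1,1)$-class lives in $R^{2d-4}$, not $R^{d-4}$ (compare the indexing in \eqref{churrasco2019}), though this does not affect the structure of the computation you describe.
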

In order to prove Theorem \ref{main}, Theorem \ref{koshti2019} and  Theorem \ref{main2} we have produced 
Table \ref{jinbabak2019} which contains more data than what is  announced in these theorems. 
Let us explain this table for the row $d=6$. 
The number of pairs $(C_1,C_2)$ in this case is $355=212+15+79+49$. Among these we have $212$ general pencils. 
The number of pairs with an inclusion between $\TS_0V_{[C_i]},\ \ i=1,2$ is $15$.
In the remaining cases we have analyzed the algebraic cycles 
$\cf_1 C_1+\cf_2 C_2$ with \eqref{alexis2019-2}.
Note that if we set $\cf:=\frac{\cf_1}{\cf_2}$ then we have $V_{\cf_1 [C_1]+\cf_2 [C_2] }=V_{[C_1]+\cf [C_2]}$ as the Noether-Lefschetz loci is unchanged if we multiply
the algebraic cycle by a rational number. In these cases  we have analyzed 
$V_{[C_1]+\cf [C_2]}$  in the parameter space which is described in \S\ref{02june2019}. 
For $N=2,3,4,5,6$ the number of 
pairs $(C_1,C_2)$ such that at least for one $(\cf_1,\cf_2)$, $V_{[C_1]+\cf [C_2]}$  is not  
$N$-smooth but
it is $M$-smooth for all $(\cf_1,\cf_2)$ as above and $M<N$, is respectively 
$79$, $49$, $0$, $0$ and $0$. The only exceptional case is $d=8$ and the two cases \eqref{forastf!!}.  In these  
cases $V_{[C_1]+\cf [C_2]}$ is $5$-smooth for all 
$(\cf_1,\cf_2)$ in \eqref{alexis2019-2} and the author  was not able to verify the $6$-smoothness. 
The number $299$ under NT refers to the number of cases such that 
at least for one $\cf$ as in \eqref{alexis2019-2}, $\TS_0V_{[C_1]+\cf[C_2]}$
is not transversal to the smaller deformation space described in \S\ref{02june2019}.
The numbers in Table \ref{jinbabak2019} are hyperlinked to the author's webpage in which 
the reader can find the computer produced data. Except for the  last column, the data is 
organized in the following way. It is a list of lists of the form:
{\tiny
\begin{verbatim}
 [i]:                          
   [1]:
      d_1,d_2
   [2]:
      s_1,s_2
   [3]:
      m_1,m_2
   [4]:
      a_1,a_2,a_3,a_4
   [5]:
      [1]:
         r_1,r_2
      [2]:
          ...
\end{verbatim}  
}where 
\begin{equation}
\label{solopeidando2019}
(a_1,a_2,a_3,a_4)=
(\codim(\TS_0V_{[C_1]} ),\  \codim(\TS_0V_{[C_2]} ), \codim(\TS_0V_{[C_1]+\cf [C_2]} ) ,
\codim(\TS_0V_{[C_1]}\cap \TS_0V_{[C_2]}  ),
\end{equation}
and  the fifth item is the list of all $(\cf_1,\cf_2)$ such that 
$V^N_{\cf_1[C_1]+\cf_2[C_2]}$ is the $N$-jet of a smooth  variety (it does not exist for the second and third  columns under `General' and `Inclusion'). In the case of last column, the fifth item of the data consists of all 
$(\cf_1,\cf_2)$ with ${\rm gcd}(\cf_1,\cf_2)=1$ such that $\TS_0V_{[C_1]+\cf[C_2]}$
is not transversal to the smaller deformation space described in \S\ref{02june2019}.

\begin{table}
\begin{center}
\begin{tabular}{|c|c|c|c|c|c|c|c|c|c|c|c|}
\hline
$d$  & $\#$ & General &     Inclusion   &  $N=2$  & $N=3$ &  $N=4$ & $N=5$ &  $N=6$ & $N\geq 7$ & NT\\ \hline 

$4$ &  $61$ & \href{http://w3.impa.br/~hossein/WikiHossein/files/Singular%20Codes/2019_06_Special_Components/2019_06_d=4_general_components.txt }
{$54$}  
&  
\href{http://w3.impa.br/~hossein/WikiHossein/files/Singular%20Codes/2019_06_Special_Components/2019_06_d=4_inclusion.txt}
{$7$}
& $0$ &  $0$ &  $0$ & $0$ & $0$ & $0$ &
\href{http://w3.impa.br/~hossein/WikiHossein/files/Singular%20Codes/2019_06_Special_Components/2019_06_d=4_NonTransversal.txt
}
{$7$}
\\ \hline

$5$ & $61$ &
\href{http://w3.impa.br/~hossein/WikiHossein/files/Singular%20Codes/2019_06_Special_Components/2019_06_d=5_general_components.txt
}
{$47$}
&    
\href{http://w3.impa.br/~hossein/WikiHossein/files/Singular%20Codes/2019_06_Special_Components/2019_06_d=5_inclusion.txt
}
{$4$}
&   
$0$&  
\href{http://w3.impa.br/~hossein/WikiHossein/files/Singular%20Codes/2019_06_Special_Components/2019_06_d=5_3Smooth.txt
}
{$5$}
&  
$0$
& $0$ &  
\href{http://w3.impa.br/~hossein/WikiHossein/files/Singular%20Codes/2019_06_Special_Components/2019_06_d=5_6Smooth.txt
}
{$5$}
&
$0$
& 

\href{http://w3.impa.br/~hossein/WikiHossein/files/Singular%20Codes/2019_06_Special_Components/2019_06_d=5_NonTransversal.txt
}
{$39$}

%
%
%
\\ \hline
$6$ & $355$ &
\href{http://w3.impa.br/~hossein/WikiHossein/files/Singular%20Codes/2019_06_Special_Components/2019_06_d=6_general_components.txt
}
{$212$}
&  
\href{http://w3.impa.br/~hossein/WikiHossein/files/Singular%20Codes/2019_06_Special_Components/2019_06_d=6_inclusion.txt
}
{$15$}
&   
\href{http://w3.impa.br/~hossein/WikiHossein/files/Singular%20Codes/2019_06_Special_Components/2019_06_d=6_2Smooth.txt
}
{$79$*}
&  
\href{http://w3.impa.br/~hossein/WikiHossein/files/Singular%20Codes/2019_06_Special_Components/2019_06_d=6_3Smooth.txt
}
{$49$}
&  $0$ & $0$ & $0$  & $0$ &

\href{http://w3.impa.br/~hossein/WikiHossein/files/Singular%20Codes/2019_06_Special_Components/2019_06_d=6_NonTransversal.txt
}
{$299$}
\\ \hline
$7$ & $355$ &
\href{http://w3.impa.br/~hossein/WikiHossein/files/Singular%20Codes/2019_06_Special_Components/2019_06_d=7_general_components.txt 
}
{$66$}
&  
\href{http://w3.impa.br/~hossein/WikiHossein/files/Singular%20Codes/2019_06_Special_Components/2019_06_d=7_inclusion.txt
}
{$17$}
&   
\href{ http://w3.impa.br/~hossein/WikiHossein/files/Singular%20Codes/2019_06_Special_Components/2019_06_d=7_2Smooth.txt
 }
{$229$}
&  
\href{http://w3.impa.br/~hossein/WikiHossein/files/Singular%20Codes/2019_06_Special_Components/2019_06_d=7_3Smooth.txt
}
{$35$}
&  
\href{http://w3.impa.br/~hossein/WikiHossein/files/Singular%20Codes/2019_06_Special_Components/2019_06_d=7_4Smooth.txt
}
{$8$}
& $0$ & $0$  & $0$ &

\href{http://w3.impa.br/~hossein/WikiHossein/files/Singular%20Codes/2019_06_Special_Components/2019_06_d=7_NonTransversal.txt
}
{$342$}

\\ \hline
$8$
&
\href{http://w3.impa.br/~hossein/WikiHossein/files/Singular%20Codes/2019_06_Special_Components/2019_06_d=8_rest.txt
}
{$1220$}+113 
 & 
\href{http://w3.impa.br/~hossein/WikiHossein/files/Singular%20Codes/2019_06_Special_Components/2019_06_d=8_general_components.txt
}
{$113$}
&
\href{http://w3.impa.br/~hossein/WikiHossein/files/Singular%20Codes/2019_06_Special_Components/2019_06_d=8_inclusion.txt
}
{$45$}
&   

\href{http://w3.impa.br/~hossein/WikiHossein/files/Singular%20Codes/2019_06_Special_Components/2019_06_d=8_2Smooth.txt
}
{$1155$}

&
\href{http://w3.impa.br/~hossein/WikiHossein/files/Singular%20Codes/2019_06_Special_Components/2019_06_d=8_3Smooth.txt
}
{$18$}
&
\href{http://w3.impa.br/~hossein/WikiHossein/files/Singular%20Codes/2019_06_Special_Components/2019_06_d=8_4Smooth.txt
}
{$0$}  
&
 
$0$
&
$?$
&
\href{http://w3.impa.br/~hossein/WikiHossein/files/Singular%20Codes/2019_06_Special_Components/2019_06_d=8_5Smooth_Picture_Shell.txt
}
{$? $} 

& 

\href{http://w3.impa.br/~hossein/WikiHossein/files/Singular%20Codes/2019_06_Special_Components/2019_06_d=8_NonTransversal.txt
}
{$1319$}

\\ \hline \hline
 $9$
 &
\href{http://w3.impa.br/~hossein/WikiHossein/files/Singular%20Codes/2019_06_Special_Components/2019_06_d=9_rest.txt
}
{$1314$}+19 
&  
\href{http://w3.impa.br/~hossein/WikiHossein/files/Singular%20Codes/2019_06_Special_Components/2019_06_d=9_general_components.txt
}
{$19$}
&

&    

&

&

&  
& 
& 
&

\\ \hline
$10$   &
\href{http://w3.impa.br/~hossein/WikiHossein/files/Singular%20Codes/2019_06_Special_Components/2019_06_d=10_rest.txt
}
{$3873$} 
& 

{$0$}
&

&    

&

&

&  
& 
& 
&

\\ \hline
$11$ &
\href{http://w3.impa.br/~hossein/WikiHossein/files/Singular%20Codes/2019_06_Special_Components/2019_06_d=11_rest.txt
}
{$3873$} 
& 

{$0$}
&

&    

&

&

&  
& 
& 
&

\\ \hline

\end{tabular}
\end{center}
\caption{Number of general/special components etc.  }
\label{jinbabak2019}
\end{table}
I would like to thank Roberto Villaflor, Ananayo Dan and Emre Sert\"oz 
for many useful conversations in the early stages of the present article. 

\section{Preliminaries}
\label{CaporasoHuybrechtDebarre2019}
For $N=d, d-4$  let  
\begin{equation}
\label{21oct2014}
I_N:=\left \{ (i_0,i_1,i_2,i_3)\in {\mathbb Z}^{4}\Big| 0\leq i_e\leq d-2, \ \ i_0+i_1+i_2+i_{3}=N\right\},     
\end{equation}
and for $N=2d-4$ we define $\check I_N$ as above but with the  stronger condition $i_0+i_1=d-2,\  i_2+i_3=d-2$. 
Let  $B_0, B_1$ be 
subsets of $\{\zeta\in \C  | \zeta^d+1=0\}$ with cardinalities $d_1,d_2$, respectively. 
For  $i\in \check I_{(\frac{n}{2}+1)d-n-2}$ we define the number 
\begin{equation}
\label{10a2017}
p_i:=\left(\sum_{\zeta\in B_{0}}\zeta^{i_0+1}\right)\cdot
\left(\sum_{\zeta\in B_{1}}\zeta^{i_{2}+1}\right).
\end{equation}
For any other \(i\) which is not in the set \(\check I_{(\frac{n}{2}+1)d-n-2}\), \(\pn_i\) by definition is zero. The complete intersection algebraic cycle 
$$
C: \prod_{\zeta\in B_0}(x_0-\zeta x_1)=\prod_{\zeta\in B_1}(x_2-\zeta x_2)
$$
has the periods 
\begin{equation}
\pn_i([C]):=
\label{churrasco2019}
\mathlarger{\mathlarger{\int}}_{C}
{\rm Residue}\left(  
\frac{x_0^{i_0}x_1^{i_1}x_2^{i_2} x_{3}^{i_3}\cdot  \sum_{i=0}^{3}(-1)^ix_i\widehat{dx_i}}
     {( x_0^{d}+x_1^{d}+x_2^{d}+x_{3}^d)^{2}}\right)
=\frac{2\pi \sqrt{-1}}{d^2} p_i
\end{equation}
see \cite[Theorem 1]{roberto}.  Let \([\pn_{i+j}]\)  be the matrix whose rows and columns are indexed by 
\(i\in I_{d-4}\) and \(j\in I_d\),  respectively, and in its \((i,j)\) entry we have \(\pn_{i+j}\). 

We consider the family of surfaces $X_t\subset \P^3$ given by the homogeneous polynomial:
\begin{equation}
\label{15dec2016}
f_t:=x_0^{d}+x_1^{d}+x_2^d+x_{3}^d-\sum_{j\in I_d}t_j x^j=0,\ \ 
\end{equation}
where $t=(t_j)_{j\in I_d}\in(\T,0)$. 
In a Zariski neighborhood of the Fermat variety, and up  to linear transformations of $\Pn {3}$,  
every  surface  can be written in this format. More precisely, the derivative of the canonical map 
$i: {\rm PGL}(4,\C)\times \T\to \T_{\rm full}$ at $({\rm identity},0)$ is an isomorphism, and hence, $i$ is etale at this point.  
By definition $\T$ is a Zariski open 
subset of the vector space $\C[x^{I_d}]$ generated by $x^i,\ \ i\in I_d$ and it parameterizes smooth surfaces.  Therefore, $f\in \T$ parametrizes a surface  given by 
$x_0^d+x_1^d+x_2^d+x^d_{3}+f=0$. In this way, $\TS_0\T=\C[x^{I_d}]$. Any statement  on Noether-Lefschetz locus for the full parameter space $\T_{\rm full}$ which appears 
in the present article
follows from the same statement for $\T$, and from now on, we will only consider $\T$.

A cycle $\delta_0\in H_2(X_0,\Q)$ satisfying 
$$
\mathlarger{\int}_{\delta_0}\omega=0,\ \ \forall  \omega\in H^0(X_0, \Omega_{X_0}^2)
$$ 
is called a Hodge cycle.  Let  $\omega_1,\omega_2,\cdots,\omega_a,\ \ a=h^{20}(X)$ be sections of the bundle 
$H^0(X, \Omega^2_{X_t}),\ t\in(\T,0)$ such that  
they form a basis at each fiber and $\delta_t\in H_n(X_t,\Q)$ be  the monodromy/parallel
transport of $\delta_0$ to $X_t$, see \cite[\S 5.3.2]{vo03}. 
The analytic space  $V_{\delta_0}$ with 
\begin{equation}
\label{10maio16}
\O_{V_{\delta_0}}:=\O_{\T,0}\Bigg/\left\langle \mathlarger{\int}_{\delta_t}\omega_1, 
\mathlarger{\int}_{\delta_t}\omega_2, \cdots, \mathlarger{\int}_{\delta_t}\omega_a \right\rangle,
\end{equation}
is called the Noether-Lefschetz locus passing through $0$ and corresponding to $\delta_0$.
It might be non-reduced, see for instance \cite[Exercise 2, page 154]{vo03}. The tangent space of the Noether-Lefschetz locus at the Fermat point is given by 
\begin{equation}
\label{16.06.2019}
\TS_0V_{\delta_0}=\ker([\pn_{i+j}(\delta)]):=\left\{ \sum_{i\in I_d} {v_i}x^i \Big |   [v_i][\pn_{i+j}(\delta)]^\tr=0 \right\},
\end{equation}
where $\pn_i(\delta_0):=\int_{\delta_0}\omega_i,\ \ i\in I_{2d-4}$ are periods of $\delta_0$. 
This follows from infinitesimal variation of Hodge structures introduced in \cite{CGGH1983}. For an easy proof of this see \cite[\S 16.5]{ho13}.

Let $\MI_{\T,0}$ be the maximal ideal of $\O_{\T,0}$, that is, the set of germs of holomorphic functions in $(\T,0)$
vanishing at $0$. The $N$-th order infinitesimal scheme  
$V^N_{\delta_0}$ is the induced scheme  by \eqref{10maio16} in the infinitesimal scheme $\T^N:=
\spec(\O_{\T,0}/\MI^{N+1}_{\T,0})$. We denote by $\X^N/\T^N$ the $N$-th order 
infinitesimal deformation of $X_0$ induced by $\X/\T$. 
Let $\cl(Z_0)\in H^n_\dR(X_0)$ be the class of a divisor  $Z_0$ in $X_0$. Let us consider the  Gauss-Manin connection 
$$
\nabla: H^2_\dR(\X/\T)\to \Omega_\T^1\otimes_{\O_\T} 
H^2_\dR(\X/\T).
$$
It induces a connection in $H^2_\dR(\X^N/\T^N)$ which we call it again the Gauss-Manin connection.  
There is a unique section $\se$ 
of  $H^2_\dR(\X^N/\T^N)$ such that $\nabla(\se)=0$ and $\se_0=\cl(Z_0)$. This is called
the horizontal extension of $\cl(Z_0)$ or a flat section of the cohomology bundle.
An equivalent definition for $V^N_{[Z_0]}$ is as follows.
\begin{defi}\rm
\label{kisin2018} 
 The  infinitesimal Noether-Lefschetz locus $V_{[Z_0]}^N$ is a subscheme of $\T^N$ 
 given by the conditions
\begin{eqnarray}
 \label{17/11/2018-1}
& & \nabla(\se)=0,\\ \label{17/11/2018-2}
&  & \se \in F^{1}H^2_\dR(\X^N/\T^N),\\  \label{17/11/2018-3}
& & \se_0=\cl(Z_0).
\end{eqnarray}
\end{defi}
\begin{defi}\rm
 We say that $V_{[Z_0]}$ is $N$-smooth if $V^N_{[Z_0]}$  the $N$-jet of a smooth  variety at $0$.
 For a more computational and differential geometric approach to smoothness see \cite[\S 18.5]{ho13}.
\end{defi}

\section{General components}
Using the following proposition we can produce many examples of general pencils. 
\begin{prop}
\label{danieltoulouse2019}
 Let $X_0$ be a smooth surface of degree $d$. Assume that $X_0$  has two Hodge cycles 
 $\delta_1$ and $\delta_2$ such that 
 \begin{enumerate}
 \item
 $V_{\delta_1}$ is very general, in the sense that $\codim \TS_0V_{\delta_1}=h^{20}:=\binom{d-1}{3}$.  
 \item 
  $\TS_0V_{\delta_1}\not\subset \TS_0V_{\delta_2}\not= \TS_0\T$
 \end{enumerate}
 Then the Noether-Lefschetz loci 
 $V_{\delta_1+\cf\delta_2}$ for all $\cf\in\Q$ except a finite number of them,  are set theoretically different, smooth, reduced  and very general.
\end{prop}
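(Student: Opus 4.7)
The plan is to reduce the statement to linear algebra on the period pencil in \eqref{16.06.2019}. Writing $M := [\pn_{i+j}(\delta_1)]$ and $N := [\pn_{i+j}(\delta_2)]$, the $\Q$-linearity of $\delta \mapsto \pn_{i+j}(\delta)$ gives $\TS_0 V_{\delta_1 + \cf \delta_2} = \ker(M + \cf N)$ for every $\cf \in \Q$. Hypothesis 1 says $M$ has full row rank $h^{20}$, so some $h^{20} \times h^{20}$ minor of $M$ is nonzero. The same minor of $M + \cf N$ is a polynomial in $\cf$ of degree at most $h^{20}$ that does not vanish at $\cf = 0$, hence has at most $h^{20}$ roots. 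Outside this finite exceptional set $E \subset \Q$ the pencil attains its maximal rank $h^{20}$, and therefore $\codim \TS_0 V_{\delta_1 + \cf \delta_2} = h^{20}$, which is the very general conclusion.

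Next I would upgrade this to smoothness and reducedness at $0$. The defining ideal \eqref{10maio16} of $V_{\delta_1 + \cf \delta_2}$ has exactly $h^{20}$ generators, so every irreducible component through $0$ has codimension at most $h^{20}$. Combined with the matching lower bound $\codim \TS_0 V_{\delta_1 + \cf \delta_2} = h^{20}$ just established, the local dimension at $0$ equals the Zariski tangent space dimension, which is precisely the smoothness criterion for an analytic scheme. In particular the ideal is reduced at $0$.

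For set-theoretic distinctness, fix $\cf, \cf' \in \Q \setminus E$ with $\cf \neq \cf'$ and suppose $\TS_0 V_{\delta_1 + \cf \delta_2} = \TS_0 V_{\delta_1 + \cf' \delta_2}$. Any $v$ in this common kernel satisfies $(M + \cf N)v = (M + \cf' N)v = 0$, so $(\cf - \cf')Nv = 0$ forces $Nv = 0$ and then $Mv = 0$. Hence the common tangent space coincides with $\ker M \cap \ker N \subset \ker M = \TS_0 V_{\delta_1}$; both have codimension $h^{20}$, so equality forces $\TS_0 V_{\delta_1} = \ker M \cap \ker N \subset \ker N = \TS_0 V_{\delta_2}$, contradicting hypothesis 2. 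Thus the tangent spaces at $0$ are pairwise distinct for $\cf \in \Q \setminus E$, and this alone implies that the germs of $V_{\delta_1 + \cf \delta_2}$ at $0$ are pairwise distinct as analytic subsets of $\T$.

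The main obstacle I foresee is not the pencil-of-matrices argument, which is elementary once the tangent-space identification is accepted, but the smoothness step: one must be sure that the scheme-theoretic tangent space of $V_{\delta_1 + \cf \delta_2}$ really is $\ker(M + \cf N)$ as asserted by \eqref{16.06.2019}, and that \eqref{10maio16} is generated by exactly the $h^{20}$ period integrals rather than by a longer list that would weaken the codimension upper bound. Both points are already invoked via infinitesimal variation of Hodge structures (see \cite[\S 16.5]{ho13}), so in practice the argument is clean.
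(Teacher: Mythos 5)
Your argument is correct and follows essentially the same route as the paper's proof: you identify the tangent spaces as kernels of the matrix pencil $M+\cf N$, use hypothesis~1 to bound the exceptional set, use $\ker M\cap\ker N$ and hypothesis~2 for pairwise distinctness, and close by matching the tangent-space codimension against the number $h^{20}$ of defining equations to get smoothness and reducedness. The only cosmetic difference is that you make the finiteness of the exceptional set explicit via a nonvanishing $h^{20}\times h^{20}$ minor of $M+\cf N$, whereas the paper invokes lower semi-continuity of $\cf\mapsto\codim\TS_0V_{\delta_1+\cf\delta_2}$ — these are the same fact, since that minor polynomial is precisely what makes the rank lower semi-continuous.
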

\begin{proof}
The function $\cf\mapsto \codim (\TS_0V_{\delta_1+\cf\delta_2} )$ is lower semi-continuous and it reaches its maximum at $\cf=0$. This implies that for all except a finite number of $\cf\in\Q$ we have 
$\codim (\TS_0V_{\delta_1+\cf\delta_2} )= \codim (\TS_0V_{\delta_1} )$.  For two rational numbers $\cf_1,\cf_2$ with $\cf_1\not= \cf_2$ we have 
$$
\TS_0V_{\delta_1+\cf_1\delta_2}\cap \TS_0V_{\delta_1+\cf_2\delta_2}=\TS_0V_{\delta_1}\cap \TS_0V_{\delta_2}
$$
and the codimension of this vector space is bigger than $h^{20}$. This follows from our hypothesis 
$\TS_0V_{\delta_1}\not\subset \TS_0V_{\delta_2}$ and $\codim \TS_0V_{\delta_1}= h^{20}$. 
This implies that 
the vector spaces 
$\TS_0V_{\delta_1+\cf\delta_2}$ form a pencil  with the axis $\TS_0V_{\delta_1}\cap \TS_0V_{\delta_2}$.
Since $\codim (\TS_0V_{\delta_1+x\delta_2})$ is also the number of equations defining $ V_{\delta_1+\cf\delta_2}$, the statement follows.  
\end{proof}
\begin{proof}{(of Theorem \ref{main2})}
We just need to check the hypothesis of Proposition \ref{danieltoulouse2019} for all 
pairs $(C_1,C_2)$
In Table \ref{jinbabak2019} under the column `General' we have the number of general pencils 
among all pencils described in the Introduction. Clicking at each number the reader can find 
the list of such pencils. This includes the case 
\begin{equation}
\label{khaste2019}
d_1=d_2=s_1=s_2=\left[\frac{d-1}{2}\right],\ \ \ \
m_1=m_2=0.  
\end{equation}
\end{proof}
\begin{rem}\rm
For $d=9,10$  the Noether-Lefschetz locus $V_{[C_1]+\cf [C_2]}$ with \eqref{khaste2019} is not a general pencil. 
We have $(a_1,a_2,a_3,a_4)=(46,46,50,72), h^{20}=56$ and for $d=10$ we have  
$(a_1,a_2,a_3,a_4)=(62,62,80,114)$ and $h^{20}=84$,
where $a_i$'s are defined in \eqref{solopeidando2019}. 
\end{rem}

\section{Deformation space}
\label{02june2019}
Let us take two Hodge cycles 
$\delta_1,\ \delta_2\in H_2(X_0,\Z)$. We would like to compute a vector space $W\subset\TS_0\T$ such that 
$\TS_0\T$ is a direct sum of $\TS_0V_{\delta_1}\cap\TS_0V_{\delta_2}$ and $W$, and $\TS_0V_{\delta_1 +\cf \delta_2}$ intersects $W$ transversely, that is, the codimension of $\TS_0V_{\delta_1 +\cf \delta_2}$ in $\TS_0\T$ is equal to the codimension of 
$\TS_0V_{\delta_1 +\cf \delta_2}\cap W$ in $W$. For this we consider the vertical   concatenation  $A$ of
$[\pn_{i+j}(\delta_1)]$ and $[\pn_{i+j}(\delta_1)]$.  Its kernel is 
$ \ker([\pn_{i+j}(\delta_1)])\cap \ker [\pn_{i+j}(\delta_2)]$. We compute a $a\times a$ minor $B$ of $A$ such that $\det(B)\not=0$ and $a$ 
is the rank of $A$. Let $I^*$ be the set of row indices of $B$.    
We also check that the submatrix of $[\pn_{i+j}(\delta_1+\cf\delta_2)]$, with rows indexed by $I^*$ and all columns has the same rank as 
$[\pn_{i+j}(\delta_1+\cf\delta_2)]$.  This implies that  $\TS_0V_{\delta_1 +\cf \delta_2}$ intersects $W$ transversely, 
where the vector space $W$ is generated by
monomials $x^i, i\in I^*$. In the new deformation space 
\begin{equation}
\label{clarakaren2019}
 X_t:\ \ x_0^d+x_1^d+x_2^d+x_3^d-\sum_{i\in I^*}t_ix^i=0,\ \ t:=(t_i,\ i\in I^*)\in\C^{\#I^*}, 
\end{equation}
we have $\TS_0V_{\delta_1}\cap\TS_0V_{\delta_2}=\{0\}$ and  
$\TS_0V_{\delta_1 +\cf \delta_2}$ form a pencil of vector spaces intersecting each other at $\{0\}$.
The procedure {\tt DeformSpace} is dedicated to the computation of the deformation space in \eqref{clarakaren2019}.

\section{The creation of a formula}
\label{aima2017}
In this section we compute the Taylor series of 
the integration of differential forms 
over monodromies of the rational curve 
\begin{equation}
\label{CarolinePilar}
 \P^1:  
\left\{
 \begin{array}{l}
 x_{0}-\zeta_1x_{1}=0,\\
 x_{2}-\zeta_2 x_{3}=0,
 \end{array}
 \right.\ \ \ \ \ \zeta_1^d=\zeta_2^d=-1,
 \end{equation}
 inside the Fermat surface $X_0: x_0^d+x_1^d+x_2^d+x_3^d=0$. The content of this section is a reformulation of
 \cite[\S 18.3]{ho13}.
 For a rational number $r$ 
let $[r]$ be the integer part of $r$, that is $[r]\leq r<[r]+1$, and  $\{r\}:=r-[r]$. Let
also $(x)_y:=x(x+1)(x+2)\cdots(x+y-1),\ (x)_0:=1$ be the Pochhammer symbol. For $\beta\in \N_0^{4}$,   $\bar\beta\in \N_0^{4}$ is defined by the rules:
$$
0\leq \bar\beta_i \leq d-1,\ \  \beta_i\equiv_{d}\bar \beta_i.
$$
Consider the family of surfaces in \eqref{15dec2016}. 
\begin{theo}
\label{InLabelNadasht?}
Let $\delta_{t}\in H_2(X_t,\Z),\ t\in(\T,0)$ be the monodromy (parallel transport) of the cycle 
$\delta_0:=[\P^1]\in H_2(X_0,\Z)$ along a path which 
connects $0$ to $t$. 
For a monomial $x_0^{\beta_0} x_1^{\beta_1}x_2^{\beta_2}x_{3}^{\beta_{3}}$ of degree $d\cdot k-4$  
we have 
 \begin{eqnarray}
 \label{15.12.16}
 & & 
 \frac{ -d^{2}  \cdot (k-1)!}{ 2\pi \sqrt{-1}}
 \mathlarger{\mathlarger{\int}}_{\delta_t}\Resi\left(\frac{
 x_0^{\beta_0} x_1^{\beta_1}x_2^{\beta_2}x_{3}^{\beta_{3}}
 \left(  \sum_{i=0}^3 (-1)^ix_i\widehat{dx_i}  \right)
 }{f^{k}_t}\right) =\\
 & & 
\mathlarger{\mathlarger{\mathlarger{\sum}}}_{a: I_d{}\to \N_0}
\left(
\frac{1}{ a! } 
\zeta_1^{  
\overline{(\beta+a^*)_0+1}   }\cdot \zeta_2^{\overline{(\beta+a^*)_2+1}}
\mathlarger{\prod}_{i=0}^{3}\left( \left\{\frac{\check\beta_i+1}{d}\right\}\right)_{ \left[\frac{\check\beta_i+1}{d}\right ]}
 \right) \cdot  t^a,
\nonumber
\end{eqnarray}
where  the sum runs through all $\#I_d$-tuples $a=(a_\alpha,\ \ \alpha\in I_d)$
of non-negative integers such that  
\begin{equation}
\label{TheLastMistake2017}
\left\{\frac{ (\beta+a^*)_{2e}+1}{d} \right\}+   \left\{\frac{ (\beta+a^*)_{2e+1}+1}{d} \right\}=1,\ \ \  
e=0,1, 
\end{equation}
and 
\begin{equation}
t^a:=\prod_{\alpha\in I_d}t_\alpha^{a_\alpha}, \ \ \ \ \  
a!:=\prod_{\alpha\in I_d}a_\alpha!,  \ \   \ \  a^* := \sum_{\alpha}a_\alpha\cdot \alpha.
\end{equation}
\end{theo}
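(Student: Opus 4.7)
The plan is to reduce the computation to well-known Fermat periods over the linear $\P^1$-cycle by means of a term-by-term Taylor expansion in $t$. Write $f_t = f_0 - g$ where $g = \sum_{\alpha \in I_d} t_\alpha x^\alpha$. Since $\delta_t$ is obtained from $\delta_0$ by parallel transport, for any holomorphic family of forms the integrals $\int_{\delta_t}$ assemble into a holomorphic function of $t$, so I can freely commute integration with the geometric-series expansion
$$\frac{1}{f_t^k} \;=\; \frac{1}{f_0^k}\Bigl(1 - \frac{g}{f_0}\Bigr)^{-k} \;=\; \sum_{n\geq 0} \binom{n+k-1}{n}\,\frac{g^n}{f_0^{n+k}}.$$
Applying the multinomial theorem $g^n = \sum_{|a|=n}\frac{n!}{a!}\, t^a\, x^{a^*}$ yields
$$\int_{\delta_t}\!\Resi\!\left(\frac{x^\beta\,\Omega}{f_t^k}\right) \;=\; \sum_{a}\, \frac{(|a|+k-1)!}{(k-1)!\,a!}\, t^a \int_{\delta_0}\!\Resi\!\left(\frac{x^{\beta+a^*}\,\Omega}{f_0^{|a|+k}}\right),$$
where $\Omega = \sum_{i=0}^3 (-1)^i x_i \widehat{dx_i}$. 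The problem is thereby reduced to evaluating each individual Fermat period on the right-hand side and matching coefficients.

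Next I would evaluate each Fermat period by parameterizing $\delta_0 = \P^1$ as $[s\zeta_1 : s : u\zeta_2 : u]$ for $[s:u]\in\P^1$, writing the Leray residue explicitly near this curve, and reducing to an integral over $\P^1$. The key point is that after the Leray residue picks up its normal-bundle contribution, the remaining integrand factors along the two coordinate pairs $(x_0,x_1)$ and $(x_2,x_3)$, so the integral splits as a product of two one-variable integrals. Each of these is a classical Beta integral (the case $k=2$ is already \eqref{churrasco2019}), and an elementary change of variables converts its value into a product of Pochhammer symbols of the shape $\bigl(\{(\check\beta_i+1)/d\}\bigr)_{[(\check\beta_i+1)/d]}$. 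The phase factors $\zeta_1^{\overline{(\beta+a^*)_0+1}}$ and $\zeta_2^{\overline{(\beta+a^*)_2+1}}$ appear from substituting the parameterization into the monomial $x^{\beta+a^*}$ and retaining only the residue contribution; the mod-$d$ reduction is what produces the overline.

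A crucial feature to verify is the vanishing pattern: each one-variable Beta integral is non-zero precisely when the two exponents mod $d$ in the complementary pair sum to $d-1$. Combined for the pairs $(0,1)$ and $(2,3)$, this is exactly condition \eqref{TheLastMistake2017}. Finally one checks that the overall normalization $\tfrac{-d^2(k-1)!}{2\pi\sqrt{-1}}$ on the left absorbs the $\tfrac{2\pi\sqrt{-1}}{d^2}$ factor arising from each single Fermat period, while the binomial coefficient $\binom{|a|+k-1}{|a|}=\frac{(|a|+k-1)!}{(k-1)!\,|a|!}$ from the geometric series and the multinomial factor $\frac{|a|!}{a!}$ combine to leave precisely $1/a!$ in front of $t^a$.

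The main obstacle is the careful bookkeeping: aligning the sign/orientation convention of $\Resi$ with that of $\delta_0$, pinning down the floor-and-fractional-part conventions so that they produce the Pochhammer symbols in the exact form stated, and matching the mod-$d$ reductions on the exponents of $\zeta_1,\zeta_2$ with the combinatorial vanishing in \eqref{TheLastMistake2017}. Since the theorem is described as a reformulation of \cite[\S 18.3]{ho13}, all of these delicate steps are essentially already present there, and the remaining work is the recasting into the stated closed form.
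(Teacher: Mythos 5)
Your proposal is essentially correct and follows the standard route that \cite[\S 18.3]{ho13} itself takes: expand $1/f_t^k$ as a convergent geometric/multinomial series inside a tubular representative of the period, commute summation with integration by holomorphy in $t$, and then evaluate the individual Fermat periods $\int_{\delta_0}\Resi(x^{\beta+a^*}\Omega/f_0^{|a|+k})$ by the explicit linear-cycle period formula (of which \eqref{churrasco2019} is the $k=2$ instance), whose factorized Pochhammer-times-root-of-unity form produces both the vanishing condition \eqref{TheLastMistake2017} and the coefficient on the right. Note that the paper does not supply an independent proof here — it explicitly states this section is a reformulation of the book — so there is nothing to diverge from; your outline fills that in along the expected lines, and the only places you leave genuinely elliptic (the pole-order-$k$ reduction of the Leray residue and the exact sign/orientation matching that absorbs the $2\pi\sqrt{-1}/d^2$ normalization) are precisely the bookkeeping steps you correctly flag as residing in the cited reference.
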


\section{Proof of Theorem \ref{main}, \ref{koshti2019} and \ref{main2}}
We have written a computer code {\tt code1} which for any $C_1$ and $C_2$ as in the Introduction performs the following computations.
For the parameter space \eqref{15dec2016} it uses \eqref{churrasco2019} and \eqref{16.06.2019} in order to compute
the numbers $a_1,a_2,a_3,a_4$ in \eqref{solopeidando2019} for $\cf=11$. In order to be sure that $a_3=\codim(\TS_0V_{[C_1]+\cf[C_2]})$
for generic $\cf\in\Q$, in a separate code called {\tt code3} we have checked this equality for $a_3+2$  values $\cf=2,3,\cdots, a_3+3$.
The number $a_4:=\codim(\TS_0V_{[C_1]}\cap \TS_0V_{[C_2]})$ is the rank of 
vertical concatenation of the matrices $[\pn_{i+j}([C_1])]$ and $[\pn_{i+j}([C_2])]$.
Therefore,  there is no inclusion between  $\TS_0V_{[C_1]}$ and  $\TS_0V_{[C_2]}$ if and only if $a_3\not=a_4$. The main code {\tt code1} 
verifies whether 
$a_3=\binom{d-1}{3} \ \  \& \ \  a_3\not= a_4$. This is the hypothesis of Proposition \ref{danieltoulouse2019}, and so, if the mentioned condition is satisfied  we get a general pencil. All theses cases are gathered under the column `General' in Table \ref{jinbabak2019}. Then {\tt code1} verifies  whether $a_3=a_4$. These are collected under the column `Inclusion'. The rest of the cases are $a_3<\binom{d-1}{3}\ \ \& \ \  a_3\not= a_4$.
The verification of smoothness of $V_{[C_1]+\cf[C_2]}$ is an infinite number of polynomial equalities which at the present moment, the author does no know how to perform it by computer. However, the smoothness of $V_{[C_1]+\cf[C_2]}^N$ (equivalently $N$-smoothness of 
$V_{[C_1]+\cf[C_2]}$) is a finite number of polynomial equalities, see \cite[after Theorem 18.9]{ho13}. For the parameter space \eqref{15dec2016}, this verification is heavy even for $N=2$. 
\begin{rem}\rm
The case $d=5$ is the only instance   in which the author was able to compute $N$-smoothness for $N=2,3,4$ for the family
\eqref{15dec2016}. In this case, it turns out that there are $10$ Noether-Lefschetz locus $V_{[C_1]+\cf[C_2]}$ which are $2$-smooth but not $3$-smooth for all $\cf\in\Q$ as in Theorem \ref{main}. Note that according to Table \ref{jinbabak2019} for the smaller parameter space \eqref{clarakaren2019} and for five of these $10$ case we have to compute until $6$-smoothness.  
\end{rem}
For the rest of the computation we use the parameter space 
$\check\T$ in \eqref{clarakaren2019}. We have verified  that $V_{[C_1]+\cf[C_2]}$ with $\cf$ as 
in \eqref{alexis2019-2} is transversal in $\check\T$ at $0$, that is, the codimension of 
$\TS_0V_{[C_1]+\cf[C_2]}$ in $\TS_0\T$ is equal to the codimension of 
$\TS_0V_{[C_1]+\cf[C_2]}\cap \TS_0\check\T$ in $\T_0\check\T$. All the non-transversal coprime pairs $(\cf_1,\cf_2)$ are collected in the column `NT' of Table \ref{jinbabak2019}. It turns out that such bad cases are included in the set $0\leq \cf_1\leq 2,\ |\cf_2|\le 2$, and that is why we have excluded them in \eqref{alexis2019}. 
For the preparation of this column we have used {\tt code3}.
\begin{rem}\rm
For fixed  $d$ we can make the set \eqref{alexis2019-2} bigger by looking the corresponding data under `NT'. 
For instance, for $d=8$ we only need to exclude the cases $\cf_1, |\cf_2|=0,1$. 
\end{rem}
The transversality statement as above  implies that $V_{[C_1]+\cf[C_2]}^N$ is not smooth in $(\T,0)$ if its scheme theoretical intersection with $\check\T^N$ is not  
smooth. 
It follows that if  $V_{[C_1]+\cf[C_2]}\cap \check\T$ is not smooth then $V_{[C_1]+\cf[C_2]}$ is not smooth too. 
Such non-smooth cases are gathered under the columns $N=2,3,\cdots$.
\begin{rem}\rm
 For the exceptional case in Theorem \ref{koshti2019} the only non-transversal case is $(\cf_1,\cf_2)=(1,0)$. In this case $V_{[C_1]+\cf [C_2]}=V_{[C_1]}$ which is a branch of the Noether-Lefschetz locus parameterizing surfaces containing a line. It is known that  $V_{[C_1]}$ for the parameter space \eqref{15dec2016} is smooth.  
\end{rem}

\begin{rem}\rm
 The most time consuming verification has been the proof of Theorem \ref{koshti2019} using {\tt code1}. It took more than $10$ days which is mainly due to implementation of the Taylor series in \S\ref{aima2017}. The verification of $3873$ cases for $d=11$ in Table \ref{jinbabak2019} and using 
 {\tt code2} and the fact that there is no general pencil in this case has taken several days, for further computational details see \S\ref{27.06.2019}.   
\end{rem}

\begin{rem}\rm
 There are $7$ exceptional cases in Table \ref{jinbabak2019}, $d=6, N=2$ and $(d_1,d_2,s_1,s_2,m_1,m_2)$ being:
\begin{eqnarray}
\label{marquesdeabrante2019-1}
 & &(1,2,1,2,1,1), (2,2,2,2,1,2), (2,2,2,2,2,1), \\ 
 \label{marquesdeabrante2019-2}
 & & 
 (1,3,1,3,1,1),(2,3,2,3,2,1),\\
 \label{marquesdeabrante2019-3}
 & & (1,3,1,3,1,2), (2,3,2,3,2,2),
\end{eqnarray}
which are among the $79$ cases (that is why it is stared). 
 For these cases $V^2_{[C_1]+\cf [C_2]}$ the $2$-jet of a smooth  variety  except for $\cf=-1$ in \eqref{marquesdeabrante2019-1},
 for $\cf=-1,\pm \frac{1}{2}$ in \eqref{marquesdeabrante2019-2} and $\cf=1,\pm\frac{1}{2}$ in \eqref{marquesdeabrante2019-3} for both 
 parameter spaces \eqref{15dec2016} and \eqref{clarakaren2019}. 
 Therefore, we have to check the 3-smoothness. It turns out that 
 $V^3_{[C_1]+\cf [C_2]}$ in the parameter space \eqref{clarakaren2019} the $3$-jet of a smooth  variety only for $\cf=0$. 
\end{rem}

\begin{rem}\rm
 For all the cases in Table \ref{jinbabak2019} under the columns $N=2,3,\ldots$, except \eqref{forastf!!}, 
 $V_{[C_1]+\cf[C_2]}$ is  possibly smooth, and hence a special component, for  $\cf=0, \ \pm 1$. These cases are not the focus of this paper, as they give at most  a finite number of special components.  
\end{rem}

\section{Higher dimensions}
All the methods introduced in the present article can be used to investigate the Hodge locus for  the full family $\X/\T$ 
of smooth hypersurfaces of degree $d$ and even dimension $n$.  A Hodge locus $V_{\delta}$ is called general (resp. very general) if its codimension 
(codimension of its Zariski tangent space at a point) is the minimum of 
 the Hodge number $h^{\frac{n}{2}+1, \frac{n}{2}-1}$ and the dimension of the moduli space of hypersurfaces 
 $r:=\binom{n+1-d}{d}-(n+2)^2$.
For  
\begin{equation}
 \label{paissandu200m}
(n,d)=(2,d), \ \ d\geq 4,\ \  (4,3), (4,4), (4,5), (6,3), (8,3),
\end{equation}
such a minimum is reached by $h^{\frac{n}{2}+1, \frac{n}{2}-1}$ and this is not equal to $r$.
In all these cases the Hodge numbers before $h^{\frac{n}{2}+1, \frac{n}{2}-1}$ are zero, and hence, the number of equations 
defining $V_\delta$ is exactly $h^{\frac{n}{2}+1, \frac{n}{2}-1}$. In these cases very general implies general. 
If the Zariski tangent space $\TS_0V_{\delta}$ has codimension $h^{\frac{n}{2}+1, \frac{n}{2}-1}$, since this is also the number of equations for  
$V_{\delta}$, we conclude that $V_\delta$ is smooth, reduced and general.  The vice versa is not true. 
Take for instance, a sum of two lines intersecting in a point and inside quintic surface. The tangent space at a generic point is of dimension $2d-7=3$, but the Noether-Lefschetz locus in this case is of dimension $h^{20}=2d-6=4$.   
For $(n,d)$ not in \eqref{paissandu200m},  we have $n\geq 4, \ \ d>\frac{2(n+1)}{n-2}$ and $r\leq h^{\frac{n}{2}+1, \frac{n}{2}-1}$. 
A general Hodge cycle by definition satisfies 
\begin{equation}
\label{laguardia-2}
 \codim(\TS_0V_\delta)=\binom{d+n+1}{n+1}-(n+2)^2,
\end{equation}
which is the dimension of the moduli space of hypersurfaces of dimension $n$
 and degree $d$. 
 This implies that a Hodge locus is just a branch of the orbit of $\GL(n+2,\C)$ acting on $0\in\T$. This means that  
 most of the Hodge cycles of the Fermat variety cannot be deformed in the moduli of hypersurfaces, 
 see \cite[\S 16.8]{ho13} for further discussion on this. Proposition \ref{danieltoulouse2019} is valid for arbitrary 
 dimensions replacing $h^{20}$ with $h^{\frac{n}{2}+1,\frac{n}{2}-1}$ in its announcement. 
 
\section{The computer code and data}
\label{27.06.2019}
For the proofs and computation in the present article we have written 
\href{http://w3.impa.br/~hossein/WikiHossein/files/Singular%20Codes/2019_06_Special_Components/2019_06_Code1.txt
}
{{\tt code1}},
\href{http://w3.impa.br/~hossein/WikiHossein/files/Singular%20Codes/2019_06_Special_Components/2019_06_Code2.txt
}
{{\tt code2}}
and
\href{http://w3.impa.br/~hossein/WikiHossein/files/Singular%20Codes/2019_06_Special_Components/2019_06_Code3.txt
}
{{\tt code3}}
which use many procedures from the author's library 
\href{http://w3.impa.br/%7Ehossein/foliation-allversions/foliation.lib
}
{{\tt foliations.lib}}
written in {\sc Singular},  see \cite{GPS01}.
In order to get these codes the reader has two options. 1. The PDF file of the article 
is linked to the author's webpage, and clicking on the name of these codes one gets the corresponding code. 
2. at the bottom of the TEX file of the article in the {\tt arxiv.com} 
one can find the codes. 
In order to check the computations of the present paper,  we 
first get the library 
{\tt foliation.lib}
from \href{http://w3.impa.br/~hossein/mod-fol-exa.html}
{the author's web page. }
\footnote{\tt http://w3.impa.br/$\sim$hossein/foliation-allversions/foliation.lib}
Then we run the codes by doing paste copy, and of course changing the degree $d$ and some other parameters if necessary. 
In order to learn about procedures, for instance {\tt DeformSpace} used in  
\S\ref{02june2019} we run
\begin{verbatim}
  LIB foliation.lib;
  example DeformSapce; 
\end{verbatim}
For the computation in this paper we have used  a computer with processor 
{\tt  Intel  Core i7-7700}, $16$ GB Memory plus $170$ GB swap memory and the operating system {\tt Ubuntu 16.04}. 
Note that we have increased the swap memory and this has been very useful for computations of the case $d=8$.


\def\cprime{$'$} \def\cprime{$'$} \def\cprime{$'$} \def\cprime{$'$}

\end{document}